\newtheorem{theorem}{Theorem}
\newtheorem{lemma}[theorem]{Lemma}
\newtheorem{proposition}[theorem]{Proposition}
\newtheorem{definition}[theorem]{Definition}
\newcommand{\RR}{s}
\theoremstyle{remark}
\newtheorem*{remark}{Remark}
\newtheorem{example}{Example}
\numberwithin{theorem}{section} \numberwithin{equation}{section}
\numberwithin{figure}{section}
\numberwithin{example}{section}
\newcommand{\wt}{k}
\newcommand{\wtabs}{\kappa}
\newcommand{\FF}{\mathcal{F}}
\newcommand{\FB}{\mathbb{F}}
\newcommand{\QQ}{\mathcal{Q}}
\newcommand{\MPexcwt}[2]{\mathcal{M}_{#1}^{\left(#2\right)}}
\newcommand{\MPexc}[1]{\mathcal{M}_{\wt}^{\left(#1\right)}}
\newcommand{\MPwt}[2]{\mathcal{M}_{#1,#2}}
\newcommand{\MP}[1]{\mathcal{M}_{\wt,#1}}
\newcommand{\MPt}[1]{\MPexc{\Ek_D}}
\newcommand{\C}{\mathbb{C}}
\newcommand{\CC}{\mathcal{C}}
\newcommand{\Z}{\mathbb{Z}}
\newcommand{\N}{\mathbb{N}}
\newcommand{\SL}{\operatorname{SL}}
\newcommand{\Ctau}{\mathscr{C}}
\newcommand{\re}{\textnormal{Re}}
\newcommand{\im}{\textnormal{Im}}
\def\H{\mathbb{H}}
\newcommand{\PP}{\mathcal{P}}
\newcommand{\Ek}{\mathcal{E}}
\newcommand{\Mfun}{\PP}
\begin{document}
\title[Modular local polynomials]{Modular local polynomials}

\author{Kathrin Bringmann} 
\address{Mathematical Institute\\University of
Cologne\\ Weyertal 86-90 \\ 50931 Cologne \\Germany}
\email{kbringma@math.uni-koeln.de}
\author{Ben Kane}
\address{Department of Mathematics\\ University of Hong Kong\\ Pokfulam\\ Hong Kong}
\email{bkane@maths.hku.hk}
\date{\today}
\thanks{The research of the first author was supported by the Alfried Krupp Prize for Young University Teachers of the Krupp Foundation and by the Deutsche Forschungsgemeinschaft (DFG) Grant No. BR 4082/3-1.  Most of this research was conducted while the second author was a postdoc at the University of Cologne.
}
\subjclass[2010] {11F37, 11F11, 11E16}
\keywords{local polynomials, modular forms, locally harmonic Maass forms}

\begin{abstract}
In this paper, we consider modular local polynomials.  These functions satisfy modularity while they are locally defined as polynomials outside of an exceptional set.  We prove an inequality for the dimension of the space of such forms when the exceptional set is given by certain natural geodesics related to binary quadratic forms of (positive) discriminant $D$.  We furthermore show that the dimension is the largest possible if and only if $D$ is an even square.  Following this, we describe how to use the methods developped in this paper to establish an algorithm which explicitly determines the space of modular local polynomials for each $D$.
\end{abstract}

\maketitle
\section{Introduction}

The most elementary non-constant modular forms are given by the Eisenstein series.  These functions are both holomorphic and their modularity follows by an elementary argument.  
For very simple subgroups of $\SL_2(\Z)$, one can construct polynomials which are modular forms; for example, $\tau^2+1$ is a weight $-2$ modular form for $<S>$, where $S:=\left(\begin{smallmatrix}0&-1\\ 1&0\end{smallmatrix}\right)$.  Subgroups for which some polynomials are modular are rather special though, and Knopp \cite{Knopp} showed that if all even weight modular forms of a given subgroup are polynomials and all odd weight modular forms are square roots of polynomials, then the group is nondiscontinuous.
It is easy to check that the only polynomial functions which are modular on all of $\SL_2(\Z)$ are constant functions, so looking in this direction for simpler modular forms than the Eisenstein series seems fruitless.  One could however ask whether a more elementary modular form on $\SL_2(\Z)$ could be constructed.  Functions along this vein with connections to modular forms have been previously considered.  Zagier \cite{ZagierRQ} constructed cusp forms $f_{k,D}$ from rational functions related to quadratic forms of discriminant $D>0$ which turned out to play a major role in understanding the Shimura and Shintani lifts between integral and half-integral weight modular forms \cite{KohnenZagier}.  Zagier \cite{ZagierQuantum} further constructed functions $F_{k,D}$ on the real line which are built out of polynomials and are closely related to $f_{k,D}$ and their periods.  

In this paper, we investigate functions which are local polynomials up to a prescribed exception set along which they need not be differentiable or even continuous. We call local polynomials which satisfy modularity and whose value along the exceptional set is an average of the values in adjacent connected components \begin{it}modular local polynomials.\end{it}  We specifically choose a certain nowhere dense exceptional set $E_D$ (which for $D>0$ is defined in \eqref{eqn:EDdef}).
  The motivation for this particular choice of exceptional set comes from certain examples of locally harmonic Maass forms investigated in \cite{BKW}.  Roughly speaking, these are functions which satisfy weight $\wt$ modularity and are almost everywhere annihilated by the weight $\wt$ hyperbolic Laplace operator $\Delta_{\wt}$.  In the special case that the locally harmonic Maass forms from \cite{BKW} are locally holomorphic, the authors and Kohnen proved that they are modular local polynomials.  
Moreover, the locally harmonic Maass forms in \cite{BKW} are intimately related to Zagier's cusp forms $f_{k,D}$ 
via a natural operator in the theory of harmonic weak Maass forms.  Through this connection, one can determine that the $D$th locally harmonic Maass form defined in \cite{BKW} is a modular local polynomial precisely when $f_{k,D}$ vanishes, motivating the study of modular local polynomials here.

We denote the space of (even integral) weight $\wt\leq 0$ modular local polynomials with exceptional set $E$ by $\MPexcwt{\wt}{E}$.  Noting that the degree of the polynomial in each connected component is at most $\wtabs:=|\wt|$, one easily finds that each $\MPexcwt{\wt}{E}$ is a finite-dimensional vector space, and it is hence interesting to investigate their dimensions.  Focusing on the exceptional sets arising in the first treatment of locally harmonic Maass forms \cite{BKW}, we abbreviate $\MP{D}:=\MPexcwt{\wt}{E_D}$.  Our main result compares the dimension of $\MP{D}$ with the dimension $\wtabs+1$ of polynomials of degree at most $\wtabs$ and the (minimal) number $r_{\FB}$ of connected components which have non-trivial intersection with a given fundamental domain $\FB$.  
\begin{theorem}\label{thm:dim}
If $\FB$ is a fundamental domain for $\SL_2(\Z)$ and $\wt<0$, then
$$
\dim\left(\MP{D}\right)\leq \left(\wtabs+1\right)r_{\FB}.
$$
Moreover,
$$
\dim\left(\MP{D}\right) = \left(\wtabs+1\right)\min_{\FB} r_{\FB}
$$
if and only if $D$ is an even square.
\end{theorem}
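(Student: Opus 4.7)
The approach has two phases: (i) derive the upper bound by a direct linear-algebra count, and (ii) characterize when equality holds in terms of the constraints produced by modularity.

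\emph{Phase (i).} Fix a fundamental domain $\FB$ for $\SL_2(\Z)$ and let $U_1,\dots,U_{r_\FB}$ denote the connected components of the interior of $\FB$ lying outside $E_D$. A modular local polynomial $P\in\MP{D}$ restricts on each $U_j$ to a polynomial of degree at most $\wtabs$, contributing at most $\wtabs+1$ free coefficients per component. The restriction map
$$
\MP{D}\longrightarrow\bigoplus_{j=1}^{r_\FB}\C[\tau]_{\leq\wtabs},\qquad P\longmapsto\bigl(P|_{U_j}\bigr)_j,
$$
is injective: modularity extends $P$ from $\FB\setminus E_D$ to all of $\H\setminus E_D$, and the averaging convention then reconstructs $P$ on $E_D$ itself. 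This yields $\dim\MP{D}\leq(\wtabs+1)r_\FB$ for every $\FB$, giving the first part of the theorem.

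\emph{Phase (ii).} Equality for a minimizing $\FB$ is equivalent to the restriction map being surjective. Surjectivity can only fail for two reasons: (a) an element $\g\in\SL_2(\Z)\setminus\{\pm I\}$ preserves some $U_j$ setwise, forcing $P|_{U_j}|_\wt\g = P|_{U_j}$, which is a nontrivial linear condition on the $\wtabs+1$ coefficients; or (b) a side-pairing of $\FB$ carries an arc of $\partial U_j$ to an arc of $\partial U_{j'}$ for $j\neq j'$, forcing $P|_{U_{j'}} = P|_{U_j}|_\wt\g$, which links coefficients of distinct components. Conversely, when neither obstruction occurs, an arbitrary tuple $(p_j)_j$ can be transported by modularity and the averaging convention into a well-defined element of $\MP{D}$. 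So the theorem reduces to showing that the obstructions (a), (b) can be simultaneously avoided, for the minimizing $\FB$, precisely when $D$ is an even square.

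The geometric dichotomy I would exploit is the following: when $D$ is a positive non-square, the stabilizer in $\SL_2(\Z)$ of a primitive form of discriminant $D$ is infinite cyclic (generated by a fundamental unit), and each geodesic in $E_D$ descends to a closed geodesic on $\SL_2(\Z)\backslash\H$; the monodromy around such a closed loop produces an obstruction of type (b). When $D$ is an odd square, the geodesics in $E_D$ have rational (cuspidal) endpoints but their configuration near the elliptic fixed points $i$ and $\rho$ forces at least one relation of type (a) or (b). Only when $D$ is an even square does $E_D$ decompose $\H$ into components that can be organized by a fundamental domain $\FB$ whose side-pairings respect the partition and avoid the elliptic fixed points, so that no relation is imposed. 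For this direction I would construct the adapted $\FB$ explicitly, list the side-pairings, and verify directly that the $(\wtabs+1)r_\FB$ coefficient choices extend to a global modular local polynomial. The main technical obstacle is the ``only if'' direction---producing a guaranteed non-trivial relation for every non-even-square $D$, uniformly in the class group of discriminant $D$---which I anticipate requires a case analysis separating the non-square discriminants (where closed geodesics do the work) from the odd-square discriminants (where the interaction with elliptic fixed points does).
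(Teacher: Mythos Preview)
Your Phase (i) is correct and is exactly the paper's Lemma~3.1. Your Phase (ii) framework---classifying failures of surjectivity as obstructions (a) and (b)---is sound in spirit, but your proposed mechanisms for producing the obstructions diverge from the paper's, and in one case point in the wrong direction.

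For non-square $D$, the paper does \emph{not} use monodromy around closed geodesics. It uses the much simpler observation that when $D$ is not a square there is a single connected component $\CC_\infty$ containing all $\tau$ with $\im(\tau)>\sqrt{D}/2$; this component is preserved setwise by the translation $T$, so the polynomial there must be $T$-invariant, hence constant. That is an obstruction of your type (a), not (b), and it falls out in one line. Your monodromy idea would require tracking how a hyperbolic element moves components across the closed geodesic, and it is not obvious this yields a relation that survives after passing to a minimizing $\FB$. For odd square $D$, you are right that the elliptic points matter: the paper shows $\rho\notin E_D$ (since $a|\rho|^2+b\re(\rho)+c=0$ forces $b$ even), and then combines $T$- and $S$-modularity at $\rho$ to get the explicit relation $P_\rho(X)=X^{\wtabs}P_\rho(1-1/X)$, which the polynomial $X$ fails. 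For even square $D$, the paper's route is slicker than building an adapted $\FB$: it first proves maximal dimension for the \emph{enlarged} exceptional set $\Ek_D:=E_D\cup\bigcup_\gamma\gamma(\partial\FF)$ (trivial, since the standard $\FF$ is now exactly tiled by components with no gluing), and then observes that for $D=4m^2$ the forms $[m,0,-m]$ and $[0,2m,-m]$ already place $\partial\FF\cap\FF$ inside $E_D$, so $\Ek_D=E_D$. This avoids any search for a special fundamental domain.
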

\begin{remark}
In the case that $\dim\left(\MP{D}\right)$ is strictly smaller than $(\wtabs+1)\min_{\FB}r_{\FB}$, the proof of Theorem \ref{thm:dim} yields specific relations between the polynomials allowed in each connected component.  This gives an algorithm to determine all possible modular local polynomials.  
\end{remark}
If $\wt=0$, the modular local polynomials are local constants.  In this case, the dimension is always maximal, independent of $D$.
\begin{theorem}\label{thm:dim0}
For every $D>0$, we have
$$
\dim\left(\MPwt{0}{D}\right) = \min_{\FB} r_{\FB}.
$$
\end{theorem}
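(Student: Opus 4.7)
The plan is to identify $\MPwt{0}{D}$ with the space of $\SL_2(\Z)$-invariant assignments of complex numbers to the connected components of $\h \setminus E_D$. Since $\wt = 0$, the ``polynomial'' on each connected component is a constant, so an element of $\MPwt{0}{D}$ is determined by a locally constant function on $\h \setminus E_D$ whose values on $E_D$ are obtained by averaging over adjacent connected components. Modularity then translates into invariance under the natural $\SL_2(\Z)$-action on components, and linearity of the averaging condition ensures that this space is a vector space whose dimension equals the number $N$ of $\SL_2(\Z)$-orbits of connected components of $\h \setminus E_D$.

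For the upper bound $\dim(\MPwt{0}{D}) \le \min_{\FB} r_{\FB}$, fix a fundamental domain $\FB$ realising the minimum, and let $P \in \MPwt{0}{D}$. The restriction of $P$ to $\FB$ is determined by its constant value on each of the $r_{\FB}$ components meeting $\FB$; modularity then propagates these values to all of $\h \setminus E_D$, and the values on $E_D$ itself are forced by the averaging condition. So $P$ is determined by at most $r_{\FB}$ complex parameters. For the lower bound $\dim(\MPwt{0}{D}) \ge N$, for each orbit $\mathcal{O}$ I would define $P_{\mathcal{O}}$ to take the value $1$ on every component in $\mathcal{O}$, the value $0$ on every other component, and on $E_D$ the average of its values on the adjacent components. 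By construction $P_{\mathcal{O}}$ is $\SL_2(\Z)$-invariant, locally constant off $E_D$, and satisfies the averaging condition, hence lies in $\MPwt{0}{D}$. The $P_{\mathcal{O}}$ have pairwise disjoint support on $\h \setminus E_D$ and are therefore linearly independent.

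Combining the two bounds forces $N \le \min_{\FB} r_{\FB}$, and the reverse inequality follows by exhibiting a fundamental domain that meets each orbit in exactly one component, giving $\min_{\FB} r_{\FB} = N$ and hence the theorem. The main obstacle is this last geometric point: producing a fundamental domain $\FB$ with $r_{\FB} = N$. Because $E_D$ is a locally finite union of geodesic arcs and $\SL_2(\Z)$ acts properly discontinuously on $\h$, only finitely many of these arcs intersect the closure of the standard fundamental domain, and one may cut and paste along them to obtain a fundamental domain in which every orbit of connected components is represented exactly once. This geometric input is the same as that underlying Theorem \ref{thm:dim}, and once it is in place the remainder of the proof is the straightforward bookkeeping above.
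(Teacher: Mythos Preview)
Your approach is correct in outline and is in fact conceptually cleaner than the paper's: the identification $\dim(\MPwt{0}{D})=N$, where $N$ is the number of $\SL_2(\Z)$-orbits of connected components of $\H\setminus E_D$, via the characteristic functions $P_{\mathcal{O}}$ is immediate and avoids the paper's intermediate step of first producing local constants for the larger exceptional set $E_D\cup\bigcup_\gamma\gamma(\partial\FB_0)$ and then arguing that no spurious discontinuities occur. The inequality $N\le r_{\FB}$ for every $\FB$ is likewise automatic, since each orbit must be represented in any fundamental domain.

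The gap is in the final step, and your reference is misplaced. You need a fundamental domain with $r_{\FB}=N$, and you defer this to ``the same geometric input underlying Theorem~\ref{thm:dim}.'' But nothing in the proof of Theorem~\ref{thm:dim} establishes this for $E_D$: Lemma~\ref{lem:trivdim} is only an upper bound, and Proposition~\ref{prop:modularP}(1) carries out a cut-and-paste argument for the \emph{finer} exceptional set $\Ek_D\supseteq\partial\FF$, where each component already sits inside a single $\SL_2(\Z)$-translate of $\FF$ and the combinatorics are trivial. For $E_D$ itself the components can straddle many translates of $\FF$, and one must actually argue: if $\FB_0$ is minimal and two components $\CC_n,\CC_j$ meeting $\FB_0$ satisfy $\CC_n=\gamma\CC_j$, then replacing $\overline{\CC_j}\cap\FB_0$ by its $\gamma$-translate yields a fundamental domain with strictly smaller $r$, a contradiction. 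This is precisely the argument the paper carries out (packaged as a proof by contradiction that the extended $\PP_n$ have no discontinuities off $E_D$), and it is the substance of the theorem rather than a preliminary you can cite. Once you supply this step your proof is complete, and the reorganization through $N$ makes the structure more transparent than the paper's.
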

\rm

The methods in this paper are constructive, and they produce an explicit description of the modular local polynomials.
\begin{example}\label{ex:5}
For $D=5$ and $\wt=-2$, the resulting algorithm yields 
$$
\dim\left(\MPwt{-2}{5}\right)=2.
$$
In order to give an explicit description of all weight $-2$ modular local polynomials for $E_5$, we denote the connected component of $\H\setminus E_5$ containing all $\tau\in \H$ with $\im(\tau)>\frac{\sqrt{5}}{2}$ by $\CC_{\infty}$ and the connected component containing $\rho:=e^{\frac{2\pi i}{3}}$ by $\CC_{\rho}$.  
One can show that the value of $\PP\in \MPwt{-2}{5}$ may be uniquely determined by its value in these two connected components.  Running through all $\alpha,\beta\in \C$, the elements of $\MPwt{-2}{5}$ are then given by the 
$\PP_{\alpha,\beta}\in \MPwt{-2}{5}$ satisfying
$$
\PP_{\alpha,\beta}(\tau):=
\begin{cases} 
\alpha &\text{if }\tau\in \CC_{\infty},\\
\beta\left(\tau^2-\tau+1\right)&\text{if }\tau\in \CC_{\rho}.
\end{cases}
$$
The details are worked out in Section \ref{sec:ex}.
\end{example}

The paper is organized as follows.  We first give the formal definitions of 
modular local polynomials in Section \ref{sec:Maass}.  We prove Theorem \ref{thm:dim} in Section \ref{sec:thmdim} 
and Theorem \ref{thm:dim0} in Section \ref{sec:thmdim0}. 
Finally, in Section \ref{sec:ex} we demonstrate how to determine the possible modular local polynomials by explicitly computing a special case.  
\section{Modular local polynomials}\label{sec:Maass}
The main goal of this section is to formally define modular local polynomials.  
For an $\SL_2(\Z)$-invariant nowhere dense 
(see page 42 of \cite{Rudin} for a definition)
 set $E$, a function $\PP:\H\to\C$ is called a \begin{it}local polynomial with exceptional set $E$\end{it} if for every connected component $\CC\subset \H\setminus E$ there exists a polynomial $P_{\CC}$ which satisfies for every $\tau\in \CC$
$$
\PP(\tau)=P_{\CC}(\tau).
$$
Note that this definition does not restrict the value of $\PP$ along the exceptional set $E$, since $\PP$ may exhibit discontinuities.  However, it is 
natural to relate the value of a function $\Mfun:\H\to \C$ for $\tau\in \H$ with the average of its value on the connected components
$$
\Ctau_{\tau}=\Ctau_{\tau}^{(E)}:=\left\{ \CC\subset\H\setminus E\Big| \tau\in \CC\cup \partial\CC\right\}
$$
containing $\tau$ in their closure.  In the case of existence, we hence define
\begin{equation}\label{eqn:avgdef}
\mathcal{A}(\Mfun)(\tau)=\mathcal{A}_{E}(\Mfun)(\tau):=\frac{1}{\#\Ctau_{\tau}}\sum_{\CC\in \Ctau_{\tau}}\lim_{\substack{w\in \CC\\ w\to \tau}} \Mfun(w).
\end{equation}
\begin{definition}
For 
an $\SL_2(\Z)$-invariant 
 nowhere dense set $E$, a function $\Mfun:\H\to\C$ is called a weight $\wt$ \begin{it}modular local polynomial\end{it} with exceptional set $E$ if $\Mfun$ satisfies the following conditions:
\noindent
\begin{enumerate}
\item
For every $\gamma\in \SL_2(\Z)$, one has $\Mfun|_{\wt}\gamma = \Mfun$.
\item
The function $\Mfun$ is a local polynomial with exceptional set $E$.
\item
The limit defining $\mathcal{A}\left(\Mfun\right)$ in \eqref{eqn:avgdef} exists for all $\tau\in \H$, and 
\begin{equation}\label{eqn:Favg}
\Mfun=\mathcal{A}\left(\Mfun\right)
\end{equation}
\end{enumerate}
\end{definition}
As mentioned in the introduction, modular local polynomials are special cases of functions called locally harmonic Maass forms which were introduced in \cite{BKW}.  
The locally harmonic Maass forms in \cite{BKW} exhibit discontinuities on the exceptional sets
\begin{equation}\label{eqn:EDdef}
E_D:=\bigcup_{Q\in \QQ_D} S_Q,
\end{equation}
where $\QQ_D$ denotes the binary quadratic forms of discriminant $D$ and for $Q=[a,b,c]\in\QQ_D$, we define 
$$
S_Q:=\left\{ \tau\in \H\Big|\  a|\tau|^2+b\re(\tau)+c=0\right\}.
$$
The definition of locally harmonic Maass forms given in \cite{BKW} had an altered (inequivalent) version of condition \eqref{eqn:Favg}, 
but one can show that the locally harmonic Maass forms in \cite{BKW} also satisfy \eqref{eqn:Favg}.  The condition given in \cite{BKW} implies an extra symmetry of the possible polynomials at points of intersection between the boundaries of more than two connected components.  Condition (3) is natural because (as we see later) modularity in $\H\setminus E$ implies that \eqref{eqn:Favg} is satisfied for $\tau\in \H$ if and only if it is satisfied for $\gamma\tau$ for all $\gamma\in \SL_2(\Z)$.

\section{Proof of Theorem \ref{thm:dim}}\label{sec:thmdim}
We break the proof of Theorem \ref{thm:dim} into three parts, the upper bound for all $D$, the equality for even square $D$, and finally we show that the equality does not hold when $D$ is not an even square.  We first prove a slightly more general version of the upper bound for arbitrary exceptional sets $E$ in terms of the (minimal) number $r_{\FB}=r_{\FB}^{(E)}$ of connected components covering a fundamental domain $\FB$.  Here a \begin{it}fundamental domain\end{it} is any set which contains precisely one element in $\H$ of each orbit under the action of $\SL_2(\Z)$.  We omit the exceptional set in this notation whenever it is clear from the context.

\begin{lemma}\label{lem:trivdim}
If $\FB$ is a fundamental domain for $\SL_2(\Z)$ 
and $\wt\leq 0$, then
$$
\dim\left(\MPexc{E}\right)\leq \left(\wtabs+1\right)r_{\FB}.
$$
\end{lemma}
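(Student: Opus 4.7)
The plan is to construct an injective linear map from $\MPexc{E}$ into a finite-dimensional vector space of dimension $(\wtabs+1)\, r_{\FB}$. Specifically, I would restrict a modular local polynomial $\Mfun$ to the finitely many connected components $\CC$ of $\H\setminus E$ that meet $\FB$, sending $\Mfun\mapsto(P_\CC)_\CC$ with each $P_\CC$ landing in $\C[\tau]_{\leq\wtabs}$. Two things then need to be checked: that each $P_\CC$ really has degree at most $\wtabs$, and that this restriction map is injective.

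For the degree bound I would invoke modularity with $S=\left(\begin{smallmatrix}0&-1\\1&0\end{smallmatrix}\right)$. For any connected component $\CC$, the set $S^{-1}\CC=\{\tau\in\H:-1/\tau\in\CC\}$ is again a connected component of $\H\setminus E$ by $\SL_2(\Z)$-invariance of $E$. Writing $P_\CC(\tau)=\sum_{j\geq 0}a_j\tau^j$ and using $\Mfun|_\wt S=\Mfun$, for $\tau\in S^{-1}\CC$ we have
\[
P_{S^{-1}\CC}(\tau)=\Mfun(\tau)=\tau^{\wtabs}\Mfun(-1/\tau)=\tau^{\wtabs}P_\CC(-1/\tau)=\sum_{j\geq 0}(-1)^j a_j\tau^{\wtabs-j}.
\]
Both sides are rational functions in $\tau$ agreeing on the nonempty open set $S^{-1}\CC$, so this identity holds on all of $\H$. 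The left-hand side being a polynomial forces $a_j=0$ for every $j>\wtabs$, so $\deg P_\CC\leq\wtabs$.

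For injectivity, suppose $\Mfun$ lies in the kernel, so $P_\CC\equiv 0$ for every component $\CC$ meeting $\FB$. Given any component $\CC'$ of $\H\setminus E$, choose $\tau'\in\CC'$ and $\gamma=\left(\begin{smallmatrix}a&b\\c&d\end{smallmatrix}\right)\in\SL_2(\Z)$ with $\gamma\tau'\in\FB$. Since $\CC'$ is open, $\gamma\tau'\in\gamma\CC'\cap\FB$, and $\gamma\CC'$ is itself a connected component of $\H\setminus E$, hence $P_{\gamma\CC'}\equiv 0$. For $\tau\in\CC'$, modularity gives
\[
P_{\CC'}(\tau)=\Mfun(\tau)=(c\tau+d)^{\wtabs}\Mfun(\gamma\tau)=(c\tau+d)^{\wtabs}P_{\gamma\CC'}(\gamma\tau)=0,
\]
so $P_{\CC'}\equiv 0$. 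Hence $\Mfun$ vanishes on $\H\setminus E$, and condition~(3) in the definition of a modular local polynomial (averages of zero limits are zero) propagates this vanishing to $E$. Thus $\Mfun\equiv 0$, the restriction map is injective, and $\dim\MPexc{E}\leq(\wtabs+1)\,r_{\FB}$.

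The main obstacle is the degree bound: extracting $\deg P_\CC\leq\wtabs$ from a single modular relation is the heart of the argument, since one must recognize that an identity holding on an open component extends to an identity of rational functions and then read off the polynomiality condition. Once the degree is controlled, the restriction map lands in a space of the desired dimension and the injectivity together with the size of the codomain deliver the stated bound.
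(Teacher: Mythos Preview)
Your argument is correct and follows essentially the same route as the paper's proof: both use modularity to bound $\deg P_{\CC}\leq\wtabs$ (the paper phrases this as ``otherwise the function would not be polynomial in some connected component,'' which your computation with $S$ makes explicit), and both use modularity plus condition~(3) to show that the restriction to the components meeting $\FB$ determines $\Mfun$ uniquely. The only point the paper mentions that you omit is the trivial case $r_{\FB}=\infty$, but that does not affect correctness.
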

\begin{remark}
For the exceptional sets $E_D$ concentrated on in this paper, $r_{\FB}$ is finite because there are only finitely many connected components in a sufficiently small neighborhood of the cusp of $\FB$ (precisely $1$ if $D$ is not a square and $\sqrt{D}$ if $D$ is a square) and any bounded set is covered by finitely many connected components because $E_D$ is nowhere dense.
\end{remark}
\begin{proof}
If $\PP\in \MPexc{E}$, then for each of the connected components $\CC\subset\H\setminus E$, there is a polynomial $P=P_{\PP,\CC}$ such that for all $\tau\in\CC$ one has
$$
\PP(\tau)=P(\tau).
$$
If $r_{\FB}=\infty$, then the inequality in the lemma holds trivially, so we may suppose that there are exactly $r$ connected components $\CC_1,\dots,\CC_r$ which intersect $\FB$ and for each $\PP\in \MPexc{E}$ denote $P_n:=P_{\PP,\CC_n}$.  

Modularity implies that the degree of $P_n$ is at most $\wtabs$, since otherwise the function would not be polynomial in some connected component.  Furthermore, \eqref{eqn:Favg} uniquely determines the value of $\PP$ on $\H$ once its value is known on $\H\setminus E$, while one obtains the value of $\PP$ on $\H\setminus E$ from the value in each $\CC_n$ via modularity.  This yields the inequality.
\end{proof}

We next prove that equality holds  in Theorem \ref{thm:dim} whenever $D$ is an even square.  In our proof, we make use of the standard fundamental domain for $\SL_2(\Z)$, defined by
\begin{multline*}
\FF:=\left\{ \tau\in \H\Big|  -\frac{1}{2}<\re\left(\tau\right)\leq \frac{1}{2}\text{ and }|\tau|>1 \right\}\\
\cup \left\{ \tau\in \H\Big| 0\leq \re\left(\tau\right)\leq \frac{1}{2}\text{ and }|\tau|=1\right\}.
\end{multline*}
In particular, we prove that every possible polynomial of degree at most $\wtabs$ is allowed in each connected component $\CC_1,\dots, \CC_{r}$ (for some $r\in \N$) of 
$$
\FF\setminus\left(E_D\cap \FF\right).
$$
We first construct a local polynomial with a (possibly) slightly larger exceptional set than $E_D$ by adding the image under every element of $\SL_2(\Z)$ of the boundary of the fundamental domain, explicitly given by
\begin{multline*}
\partial \FF=\left\{ \frac{1}{2}+it\Big| t\geq \frac{\sqrt{3}}{2}\right\}\cup \left\{ -\frac{1}{2}+it\Big| t\geq \frac{\sqrt{3}}{2}\right\}\\
 \cup \left\{ \tau\in \H\Big| -\frac{1}{2}\leq \re(\tau)\leq\frac{1}{2}\text{ and }|\tau|=1\right\}.
\end{multline*}
One easily sees that 
$$
\Ek_D:=E_D\cup \bigcup_{\gamma\in \SL_2(\Z)} \gamma \left(\partial \FF\right)
$$
is nowhere dense, and hence it makes sense to consider local polynomials with exceptional set $\Ek_D$.  To ease notation, we denote $\RR_{\FB}:=r_{\FB}^{\left(\Ek_D\right)}$.

\begin{proposition}\label{prop:modularP}
\noindent

\noindent
\begin{enumerate}
\item
For every $D>0$ and $\wt\leq 0$, one has 
$$
\dim\left(\MPt{D}\right)=\left(\wtabs+1\right)\RR_{\FF}=\left(\wtabs+1\right) \min_{\FB} \RR_{\FB}.
$$
\item
If $D$ is an even square and $\wt\leq 0$,
 then 
\begin{equation}\label{eqn:dimmax}
\dim\left(\MP{D}\right)=\left(\wtabs+1\right)r_{\FF}=\left(\wtabs+1\right)\min_{\FB} r_{\FB}.
\end{equation}
\end{enumerate}
\end{proposition}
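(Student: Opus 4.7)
The plan is to prove part~(1) by a direct construction of local polynomials with prescribed values on the components of $\FF\setminus \Ek_D$, and then obtain part~(2) as an easy consequence once we show that enlarging $E_D$ to $\Ek_D$ changes nothing when $D$ is an even square.

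\textbf{Part (1).} The upper bound $\dim(\MPt{D}) \le (\wtabs+1)\RR_\FF$ is immediate from Lemma~\ref{lem:trivdim} applied to $E=\Ek_D$. For the matching lower bound I would show that arbitrary polynomial data on the components of $\FF\setminus \Ek_D$ can be realized. Let $\CC_1,\dots,\CC_r$ (with $r:=\RR_\FF$) be the connected components of $\FF\setminus \Ek_D$ and fix polynomials $P_1,\dots,P_r$ each of degree at most $\wtabs$. Because $\partial \FF\subset \Ek_D$, every $\CC_n$ lies in the interior of $\FF$, which is disjoint from the elliptic fixed points of $\SL_2(\Z)$. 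Hence for every $\tau\in\H\setminus\Ek_D$ there is a unique $\gamma=\left(\begin{smallmatrix}a&b\\ c&d\end{smallmatrix}\right)\in\SL_2(\Z)$ and a unique index $n$ with $\gamma\tau\in \CC_n$, and I set
$$
\PP(\tau):=(c\tau+d)^{\wtabs}\,P_n(\gamma\tau).
$$
Since $\wtabs\geq 0$, the right-hand side is a polynomial in $\tau$ of degree at most $\wtabs$ on $\gamma^{-1}\CC_n$, so $\PP$ is a local polynomial with exceptional set contained in $\Ek_D$. Weight-$\wt$ modularity follows from the usual cocycle identity $(c\tau+d)=(c''(\gamma'\tau)+d'')(c'\tau+d')$ for the automorphy factor, and $\PP$ is extended to $\Ek_D$ by the averaging prescription \eqref{eqn:avgdef}, whose limits exist because $\PP$ agrees with a polynomial on each component. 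For the second equality $\RR_\FF=\min_\FB \RR_\FB$, note that $\partial\FF\subset \Ek_D$ forces every component of $\H\setminus \Ek_D$ to lie in some single $\SL_2(\Z)$-translate of $\FF$, so $\CC_1,\dots,\CC_r$ form a complete set of orbit representatives; any other fundamental domain $\FB$ must meet every orbit and hence at least $r$ distinct components.

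\textbf{Part (2).} It suffices to show $\partial\FF\subset E_D$ when $D$ is an even square, because then $\Ek_D=E_D$ by $\SL_2(\Z)$-invariance and \eqref{eqn:dimmax} is immediate from part~(1). Writing $D=4m^2$ with $m\in\N$, the three quadratic forms $[0,2m,-m]$, $[0,2m,m]$, and $[m,0,-m]$ all lie in $\QQ_D$, and their associated geodesics $S_Q$ are respectively $\re(\tau)=\tfrac{1}{2}$, $\re(\tau)=-\tfrac{1}{2}$, and $|\tau|=1$, whose union covers $\partial\FF$.

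I expect the main delicacy to be the simultaneous verification of modularity and \eqref{eqn:Favg} at points of $\Ek_D$ itself, where $\PP$ is defined by averaging. This reduces to the facts that $\SL_2(\Z)$ acts by permutation on the collection $\Ctau_\tau$ of adjacent components and that the weight-$\wt$ slash action commutes with averaging, both of which follow from continuity of the automorphy factor along each component.
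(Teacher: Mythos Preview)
Your proposal is correct and follows essentially the same construction as the paper: build $\PP$ from arbitrary polynomial data on the components inside $\FF$, extend by modularity, fill in $\Ek_D$ by averaging, and reduce part~(2) to part~(1) via $\partial\FF\subset E_D$ when $D=4m^2$. Two minor remarks. First, your ``unique $\gamma\in\SL_2(\Z)$'' is only unique up to $\pm I$; this is harmless because $\wtabs$ is even, but you should say $\PSL_2(\Z)$ or note the parity. Second, your argument for $\RR_\FF=\min_\FB\RR_\FB$ via orbit representatives is actually cleaner than the paper's: the paper instead takes a minimal $\FB$ and, for each $\CC_n$, replaces the part of $\FB$ equivalent to $\CC_n$ by a single translate $\gamma_n(\CC_n)$, arguing that minimality forces this replacement to change nothing and hence $\FB\setminus\Ek_D=\bigcup_n\gamma_n(\CC_n)$. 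Your observation that $\partial\FF\subset\Ek_D$ confines every component of $\H\setminus\Ek_D$ to a single translate of $\FF^\circ$, so that $\CC_1,\dots,\CC_r$ are a full set of orbit representatives and every fundamental domain meets each orbit, reaches the same conclusion more directly.
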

\begin{remark}
By Lemma \ref{lem:trivdim}, the dimensions in Proposition \ref{prop:modularP} are the largest possible.
\end{remark}
\begin{proof}

\noindent
(1)  We abbreviate $\RR:=\RR_{\FF}$ and denote the connected components intersecting $\FF$ by $\CC_1,\dots,\CC_{\RR}$.  To establish the first identity in part (1), we give an explicit bijection between $\RR$-tuples $\left(P_1,\dots, P_{\RR}\right)$ of polynomials of degree at most $\wtabs$ and weight $\wt$ modular local polynomials $\PP$ with exceptional set $\Ek_D$.  Specifically, we define $\PP$ such that for every $\tau\in \CC_n$ one has
$$
\PP(\tau)=P_n(\tau)
$$
and then (uniquely) specify the value of $\PP$ elsewhere so that modularity and \eqref{eqn:Favg} are satisfied.

Since $\PP$ is modular for $\tau\notin\Ek_D$ and \eqref{eqn:Favg} is satisfied by definition, it remains to prove modularity for $\tau\in \Ek_D$ and that $\PP$ is a local polynomial.  Using $\Ctau_{\gamma \tau} = \gamma\left(\Ctau_{\tau}\right)$, we obtain that
\begin{align*}
\PP\big|_{\wt}\gamma(\tau)& = \frac{\left(c\tau+d\right)^{\wtabs}}{\#\Ctau_{\gamma\tau}}\sum_{\CC\in \Ctau_{\gamma\tau}}\lim_{\substack{w\in \CC \\ w\to \gamma\tau}} \PP\left(w\right)\\
&=\frac{\left(c\tau+d\right)^{\wtabs}}{\#\Ctau_{\tau}}\sum_{\CC\in \Ctau_{\tau}}\lim_{\substack{w\in \CC \\ w\to \tau}} \PP\left(\gamma w\right)\\
&= \frac{1}{\#\Ctau_{\tau}}\sum_{\CC\in \Ctau_{\tau}}\lim_{\substack{w\in \CC \\ w\to \tau}} \left(\frac{c\tau+d}{cw+d}\right)^{\wtabs}\PP(w)\\
&= \frac{1}{\#\Ctau_{\tau}}\sum_{\CC\in \Ctau_{\tau}}\lim_{\substack{w\in \CC \\ w\to \tau}} \PP(w)=\PP(\tau).
\end{align*} 
Finally, $\PP$ is a local polynomial by definition since every polynomial $
P_n
$
has degree at most $\wtabs$, so that $
P_n
|_{\wt}\gamma$ ($\gamma\in \SL_2(\Z)$) is again a polynomial.  Therefore, $\PP$ is a modular local polynomial.  

The upper bound in Lemma \ref{lem:trivdim} implies that we have constructed all such modular local polynomials, establishing the correspondence and hence the first equality for the dimension.

It remains to show that $\RR=\RR_{\FF}$ is minimal among all $\RR_{\FB}$.  For this, suppose that $\FB$ is a fundamental domain for which  $\widetilde{\RR}:=\RR_{\FB}$ is minimal.  To conclude that $\RR=\widetilde{\RR}$, it suffices to prove that 
\begin{equation}\label{eqn:FFdecomp}
\FB\left\backslash\left(\Ek_D\cap\FB\right)\right. =\bigcup_{n=1}^{\RR} \gamma_n\left(\CC_n\right),
\end{equation}
since $\partial \FF\subset\Ek_D$ implies that the sets $\gamma_n\left(\CC_n\right)$ are disjoint.  Equation \eqref{eqn:FFdecomp} follows once we establish that for each $1\leq n\leq \RR$ there exists $\gamma_n\in \SL_2(\Z)$ such that
$$
S_n:=\left\{ \tau\in \FB\Big| \exists \gamma\in \SL_2(\Z)\text{ such that }\tau\in \gamma\left(\CC_n\right)\right\}= \gamma_n\left(\CC_n\right).
$$
Since $S_n$ is non-empty, we may choose $\gamma_n\in \SL_2(\Z)$ such that $S_n\cap \gamma_n\left(\CC_n\right)\neq\emptyset$.  By replacing $S_n$ with the single connected component $\gamma_n\left(\CC_n\right)$, we define a new fundamental domain 
\rm
$$
\FB':=\left(\FB\setminus S_n\right)\cup \gamma_n\left(\CC_n\right)
$$
which satisfies $\RR_{\FB'}\leq \widetilde{\RR}$.  Moreover, $S_n\subseteq \gamma_n\left(\CC_n\right)$, since otherwise $\RR_{\FB'}< \widetilde{\RR}$, contradicting the minimality of $\widetilde{\RR}$.  Note that since $\FB$ is a fundamental domain, every $\tau\in \gamma_n\left(\CC_n\right)$ corresponds to an $\SL_2(\Z)$-equivalent point $\tau_0\in S_n$, and hence $S_n\subseteq\gamma_n\left(\CC_n\right)$ implies that $S_n=\gamma_n\left(\CC_n\right)$.
This finishes the claim of (1).

\noindent
(2)  For the second statement, it remains to prove that $\Ek_D=E_D$ if $D=4m^2$ ($m\in \N_0$).  For this, it suffices to show that $\partial\FF\cap \FF\subset E_D$.  To do so, we define the quadratic forms
\begin{align*}
Q_1&:=[m,0,-m],\\
Q_2&:=[0,2m,-m]
\end{align*}
with discriminant $4m^2=D$.  By a direct computation, we have
\begin{align*}
S_{Q_1}&= \left\{\tau\in \H\Big|  m|\tau|-m=0\right\}=\left\{\tau\in \H\Big|  |\tau|=1\right\},\\
S_{Q_2}&= \left\{\tau\in \H\Big| 2m\re(\tau)-m=0\right\}=\left\{\tau\in \H\Big|  \re\left(\tau\right)=\frac{1}{2}\right\}.
\end{align*}
Therefore 
$$
\partial\FF\cap\FF\subset S_{Q_1}\cup S_{Q_2},
$$
and it follows that $\PP$ is a local polynomial with exceptional set $E_D$.
\end{proof}
As noted above, 
Proposition \ref{prop:modularP} is optimal in the sense that the dimension is the largest possible.  Theorem \ref{thm:dim} implies that Proposition \ref{prop:modularP} is also optimal in the sense that the restriction on $D$ is necessary 
if $\wt<0$.
\begin{proof}[Proof of Theorem \ref{thm:dim}]
By Lemma \ref{lem:trivdim} and Proposition \ref{prop:modularP}, it remains to show that if $D$ not an even square, then for every fundamental domain $\FB$ the dimension of $\MP{D}$ is strictly less than $(\wtabs+1)r_{\FB}$.  For this it suffices to prove that there is a restriction on the polynomials allowed in one of the connected components covering $\FB$.  By modularity, it is enough to determine a restriction for the allowed polynomials in one of the connected components covering $\FF$.  

If $D$ is not a square, then there exists a connected component containing those $\tau\in \H$ with $\im(\tau)>\frac{\sqrt{D}}{2}$ \cite{BKW}.  When restricted to these $\tau\in \H$, every modular local polynomial is a translation invariant polynomial and hence is constant in this connected component.  Since $\wtabs>0$, this yields a restriction on the polynomials allowed in this connected component.

If $D$ is an odd square, then $\rho=e^{\frac{2\pi i}{3}}=\frac{1}{2}+\frac{\sqrt{3}}{2}\notin E_D$, since
$$
a+\frac{1}{2}b+c=a|
\rho
|^2+b
\re(\rho)
+c=0
$$
implies that $b$ is even, and hence also $D\equiv b^2\pmod{4a}$ is even, 
contradicting
 the assumption on $D$.  Thus for every $\PP\in \MP{D}$, there exists a polynomial $P_{\rho}$ for which $\PP=P_{\rho}$ in the connected component $\CC_{\rho}$ including $\rho$.  Furthermore, there is a polynomial $P_{\rho-1}$ associated to the connected component containing $\rho-1$.  Since $\PP$ is translation invariant, we have for $\tau\in \CC_{\rho}$
$$
P_{\rho}(\tau)=\PP(\tau)=\PP(\tau-1)=P_{\rho-1}\left(\tau-1\right).
$$
From this one obtains the relation
\begin{equation}\label{eqn:Ptrans}
P_{\rho-1}(X)=P_{\rho}(X+1)
\end{equation} 
via analytic continuation.  Furthermore, since $-\rho^{-1}=\rho-1$, modularity and the evaluation \eqref{eqn:Ptrans} of $P_{\rho-1}$ yield for $\tau\in \CC_{\rho}$ that
$$
P_{\rho}\left(\tau\right) = \PP(\tau)=\tau^{\wtabs}\PP\left(-\frac{1}{\tau}\right)=\tau^{\wtabs}P_{\rho-1}\left(-\frac{1}{\tau}\right) = \tau^{\wtabs}P_{\rho}\left(-\frac{1}{\tau}+1\right).
$$
By analytic continuation, one has 
\begin{equation}\label{eqn:Prel}
P_{\rho}(X)=X^{\wtabs}P_{\rho}\left(1-\frac{1}{X}\right).
\end{equation}
Since $\wtabs>0$, the dimension of the space of polynomials satisfying \eqref{eqn:Prel} is less than $\wtabs+1$ (for example, the polynomial $X$ does not satisfy \eqref{eqn:Prel}).  
\end{proof}
\section{Proof of Theorem \ref{thm:dim0}}\label{sec:thmdim0}

In this section we consider the case $\wt=0$ (i.e., the modular local polynomials are modular local constants).  
\begin{proof}[Proof of Theorem \ref{thm:dim0}]
By Lemma \ref{lem:trivdim}, we have the upper bound 
\begin{equation}\label{eqn:wt0upper}
\dim\left(\MPwt{0}{D}\right) \leq  \min_{\FB} r_{\FB}.
\end{equation}
Suppose that $\FB_0$ satisfies 
$$
r_{\FB_0} = \min_{\FB} r_{\FB}.
$$
By the inequality \eqref{eqn:wt0upper}, it suffices to construct $r_{\FB_0}$ linearly independent modular local constants with exceptional set $E_D$. 

Abbreviating $r:=r_{\FB_0}$, we denote the $r$ connected components which intersect $\FB_0$ by $\CC_1,\dots,\CC_r$.  We follow the construction of modular local polynomials in the proof of Proposition \ref{prop:modularP} (1).  For $n\in \left\{1,\dots, r\right\}$, we define $\PP_n$ such that 
$$
\PP_n(\tau):=\begin{cases}1 &\text{if }\tau \in \FB_0\cap \CC_{n},\\ 0 &\text{if }\tau\in \FB_0\cap \CC_j,\ j\neq n,\end{cases}
$$
and then (uniquely) specify the value of $\PP_n$ elsewhere so that (weight zero) modularity and \eqref{eqn:Favg} are satisfied.  These $r$ functions are clearly linearly independent.  By construction, $\PP_n$ is a modular local constant function with exceptional set 
$$
E_D\cup \bigcup_{\gamma\in \SL_2(\Z)} \gamma\left(\partial \FB_0\right).
$$
It remains to show that $\PP_n$ is a local constant function for the smaller exceptional set $E_D$.  Assume for contradiction that there is some point $\tau_0\in \partial\left(\FB_0\right)$ and $\tau_0\notin E_D$ for which $\PP_n$ has a discontinuity at $\tau_0$.  Since $\tau_0\notin E_D$, there exists a connected component $\CC$ of $\H\setminus E_D$ for which $\tau_0\in \CC$. 
Since $\CC$ is open, every sufficiently small neighborhood $\mathcal{N}$ around $\tau_0$ is contained in $\CC$.  For $\tau\in \mathcal{N}$, invariance under $\SL_2(\Z)$ of $\PP_n$ implies that 
\begin{equation}\label{eqn:Pnval}
\PP_n(\tau)=\begin{cases} 
1 &\text{if }\tau\in \CC_n\cap \FF_0 \pmod{\SL_2(\Z)},\\
0 &\text{if }\tau\in \CC_j\cap \FF_0 \pmod{\SL_2(\Z)}\text{ for }j\neq n.
\end{cases}
\end{equation}
Now note that if 
$$
\tau_0\notin \CC_j\pmod{\SL_2(\Z)}
$$
then the intersection of $\CC_j$ and $\CC$ modulo $\SL_2(\Z)$ is trivial (the intersection of any two connected components is either trivial or the entire connected component).  Hence in particular the intersection of $\mathcal{N}$ and $\CC_j$ modulo $\SL_2(\Z)$ is trivial.  

If $\tau_0\notin \CC_n\pmod{\SL_2(\Z)}$, then by \eqref{eqn:Pnval} we conclude that for every $\tau\in \mathcal{N}$ we have
$$
\PP_n(\tau)=0.
$$
But then $\PP_n$ is continuous at $\tau_0$, contradicting the assumption.  It follows that $\tau_0\notin \CC_n\pmod{\SL_2(\Z)}$.

Similarly, if $\tau_0\notin \CC_j\pmod{\SL_2(\Z)}$ for every $j\neq n$, then by \eqref{eqn:Pnval} we conclude that for every $\tau\in \mathcal{N}$ we have
$$
\PP_n(\tau)=1,
$$
which is again continuous at $\tau_0$.  We hence conclude that 
$$
\tau_0\in \CC_n\pmod{\SL_2(\Z)}
$$
and for some $j\neq n$
$$
\tau_0\in \CC_j\pmod{\SL_2(\Z)}.
$$
Without loss of generality, we may assume that $\tau_0\in \CC_n$ and choose $\gamma\in \SL_2(\Z)$ such that 
$$
\tau_0\in \gamma\left(\CC_j\right).
$$
Since $\tau_0\in \CC_n$, $\tau_0\in \gamma\left(\CC_j\right)$, and $\tau_0\in \CC$, we conclude that (since the intersection of any two connected components is either trivial or the entire connected component) 
\begin{equation}\label{eqn:CCnCCj}
\CC_n=\CC=\gamma\left(\CC_j\right).
\end{equation}
We now construct a new fundamental domain which contradicts the minimality of $r$, namely
$$
\FF_0:=\left\{ \tau\in \FB_0\big| \tau\notin \overline{\CC_{j}}\right\} \cup \gamma\left(\overline{\CC_j}\cap \FB_0\right),
$$
where $\overline{\CC_j}$ denotes the closure of $\CC_j$.  The set $\FF_0$ is a fundamental domain because each point in this set is equivalent to precisely one point in $\FB_0$ and $\FB_0$ is a fundamental domain.  Moreover, combining \eqref{eqn:CCnCCj} with the fact that 
$$
\FB_{0}\subseteq \bigcup_{\ell=1}^{r} \overline{\CC_{\ell}},
$$
we obtain that
$$
\FF_0\subseteq \bigcup_{\ell\neq j} \overline{\CC_{\ell}}.
$$
But then $r_{\FB_0} =r-1$, contradicting the minimality of $r$.  This contradiction implies that the function $\PP_n$ is indeed a modular local constant function with exceptional set $E_D$.  We hence obtain the desired lower bound on the dimension of this space, completing the proof.

\end{proof}

\section{The case $D=5$}\label{sec:ex}
We now return to the negative weight case and work out an example in detail.  
Using the relations built in the proof of Theorem \ref{thm:dim}, we determine the space of weight $-2$ modular local polynomials with exceptional set $E_5$.  In addition to the connected components $\CC_{\infty}$ and $\CC_{\rho}$ mentioned in the introduction, we require the connected component $\CC_{\rho-1}$ containing $\rho-1$ and for $\PP\in \MPwt{-2}{5}$ we denote the corresponding polynomials by $P_{\infty}$, $P_{\rho}$, and $P_{\rho-1}$, respectively.

Since $\PP\in \MPwt{-2}{5}$ is uniquely determined by its value inside each connected component intersecting the standard fundamental domain $\FF$, we first determine which connected components of $\H\setminus E_5$ intersect $\FF$.  For this, we need to compute the quadratic forms $Q=[a,b,c]\in \QQ_5$ for which $S_Q\cap \FF\neq\emptyset$.  Using the fact that $a\neq 0$, we may rewrite the formula determining $S_Q$ as
\begin{equation}\label{eqn:SQ}
a\left(\left|\tau+\frac{b}{2a}\right|^2-\frac{5}{4a^2}\right)=0.
\end{equation}
We conclude that $S_Q$ can only intersect $\FF$ if $a=\pm 1$, since otherwise every $\tau\in S_Q$ satisfies
$$
\im(\tau)\leq \left|\tau+\frac{b}{2a}\right|=\frac{\sqrt{5}}{2|a|}\leq \frac{\sqrt{5}}{4}<\frac{\sqrt{3}}{2},
$$
which contradicts $\tau\in \FF$.   Since $S_Q=S_{-Q}$, we may assume that $a=1$.  

Since $b^2\equiv 5\pmod{4}$, we have that $b$ is odd.  Therefore $b=\pm 1$ in the case that $S_Q\cap \FF\neq \emptyset$, since for $|b|\geq 3$ and $\tau\in \FF$ we have that
$$
\left|\tau+\frac{b}{2}\right|^2\geq \frac{3}{4} + \left|\frac{2\re(\tau)+ b}{2}\right|^2\geq \frac{3}{4}+ \frac{\left(|b|-1\right)^2}{4}\geq \frac{7}{4}>\frac{5}{4},
$$
contradicting $\tau\in S_Q$.  Therefore, the only semi-circles $S_Q$ which may intersect $\FF$ are those which correspond to 
\begin{align*}
Q_1&:=[1,1,-1],\\
Q_2&:=[1,-1,-1].
\end{align*}
Since $S_{Q_1}$ and $S_{Q_2}$ both clearly intersect $\FF$ and 
$$
S_{Q_1}\cap S_{Q_2}=\{ i \},
$$
we have precisely 3 connected components whose closure covers $\FF$, namely $\CC_{\infty}$, $\CC_{\rho}$, and $\CC_{\rho-1}$.  

However, we know from \cite{BKW} that $P_{\infty}$ is a constant $c_{\infty}$, while \eqref{eqn:Ptrans} states that
$$
P_{\rho-1}(X)=P_{\rho}(X+1),
$$
and \eqref{eqn:Prel} implies that
$$
P_{\rho}(X)=X^2 P_{\rho}\left(1-\frac{1}{X}\right).
$$
Hence, if $P_{\rho}(X)=aX^2+bX+c$ ($a,b,c\in \C$), then 
\begin{align*}
aX^2+bX+c &= X^2\left(a\left(1-\frac{1}{X}\right)^2 + b\left(1-\frac{1}{X}\right) + c\right)\\
& = \left(a+b+c\right)X^2 - \left(2a+b\right)X + a.
\end{align*}
It follows that $a=c=-b$, so that $P_{\rho}(X)=a\left(X^2-X+1\right)$. We conclude that $\dim\left(\MPwt{-2}{5}\right)\leq 2$.

In order to show the claim, it remains to show that $c_{\infty}$ and $a$ may be chosen arbitrarily, which is possible if and only if there is no point in $\CC_{\infty}$ which is $\SL_2(\Z)$-equivalent to a point in $\CC_{\rho}$. 

Assume for contradiction that there exists $\tau_0\in \CC_{\rho}$ and $\gamma=\left(\begin{smallmatrix}a&b\\c&d\end{smallmatrix}\right)\in \SL_2(\Z)$ such that $\gamma\tau_0\in \CC_{\infty}$.  Since $\CC_{\rho}$, $\CC_{\infty}$, and $\gamma\left(\CC_{\rho}\right)$ are open, there exists a neighborhood $\mathcal{N}\subseteq \CC_{\rho}$ of $\tau_0$ for which $\gamma\tau_0\in \CC_{\infty}$.  But then for every $\PP\in \MPwt{-2}{5}$ there exist $\alpha$ and $\beta$ such that for every $\tau\in \mathcal{N}$ we have
$$
\alpha\left(\tau^2-\tau+1\right)=\PP\left(\tau\right) = \left(c\tau+d\right)^2\PP\left(\frac{a\tau+b}{c\tau+d}\right) = \beta\left(c\tau+d\right)^2,
$$
which clearly implies that $\alpha=\beta=0$.  However, a weight $-2$ locally harmonic Maass form $\PP_0$ with exceptional set $E_5$ was constructed in \cite{BKW}.  Since there are no weight $4$ cusp forms on $\SL_2(\Z)$, it turns out from the theory in \cite{BKW} that $\PP_0$ is a modular local polynomial and $\beta\neq 0$, leading to a contradiction.  It follows that there are no points in $\CC_{\infty}$ which are equivalent to points in $\CC_{\rho}$, so the dimension is precisely 2.


\begin{thebibliography}{99}
%\bibitem{AS} M. Abramovitz and I. Stegun, Handbook of Mathematical functions with formulas, graphs, and mathematical tables, 9th edition, New York, Dover, 1972.
%\bibitem{AhlgrenBoylan} S. Ahlgren and S. Boylan, \begin{it}Arithmetic properties of the partition function\end{it}, Invent. Math. \begin{bf}153\end{bf} (2003), no. 3, 487--502.
%\bibitem{AhlgrenBoylan2007} S. Ahlgren and S. Boylan, \begin{it}Central critical values of modular $L$-functions and coefficients of half-integral weight modular forms modulo $l$\end{it}, Amer. J. Math. \begin{bf}129\end{bf} (2007), no. 2, 429--454.
%\bibitem{Alfes} C. Alfes, \begin{it}Formulas for the coefficients of half-integral weight harmonic Maass forms,\end{it} preprint.
%\bibitem{Andrews} G. Andrews, \begin{it}The fifth and seventh order mock theta functions\end{it}, Trans. Amer. Math. Soc. \begin{bf}293\end{bf} (1986), no. 1, 113--134. 
%\bibitem{AndrewsHickerson} G. Andrews and D. Hickerson, \begin{it}Ramanujan's ``lost'' notebook VII: The sixth order mock theta functions\end{it}, Adv. Math. \begin{bf}89\end{bf} (1991), no. 1, 60--105.
%\bibitem{AtkinLehner}A. Atkin and J. Lehner,  \begin{it}Hecke operators on $\Gamma_0(m)$\end{it}, Math. Ann. \begin{bf}185\end{bf} (1970), 134--160.
%\bibitem{BHM} V. Blomer, G. Harcos, and P. Michel (with appendix by Z. Mao), \begin{it}A Burgess-like subconvex bound for twisted $L$-functions\end{it}, Forum Math. \begin{bf}19\end{bf} (2007), 61--105. 
%\bibitem{Bol} G. Bol, \begin{it}Invarianten linearer differentialgleichungen\end{it}, Abh. Mat. Sem. Univ. Hamburg \begin{bf}16\end{bf} (1949), 1--28.
%\bibitem{Borcherds} R. Borcherds, \begin{it}Automorphic forms with singularities on Grassmannians,\end{it} Invent. Math. \begin{bf}132\end{bf} (1998), 491--562.
%\bibitem{Bringmann} K. Bringmann, \begin{it}Asymptotics for rank partition functions\end{it}, Trans. Amer. Math. Soc. \begin{bf}361\end{bf} (2009), 3483--3500.
%\bibitem{BringmannFolsom} K. Bringmann and A. Folsom, \begin{it}Almost harmonic Maass forms and Kac Wakimoto characters,\end{it} J. reine und angew. Math., accepted for publication.
%\bibitem{BFK} K. Bringmann, K. Fricke, Z. Kent, \begin{it}Special $L$-values and periods of weakly holomorphic modualr forms\end{it}, Proc. Amer. Math. Soc., to appear.
%\bibitem{BringmannGuerzhoyKane} K. Bringmann, P. Guerzhoy, and B. Kane, \begin{it}Mock modular forms as $p$-adic modular forms\end{it}, Trans. Amer. Math. Soc., to appear.
%\bibitem{BGKO} K. Bringmann, P. Guerzhoy, Z. Kent, and K. Ono, \begin{it}Eichler-Shimura theory for  mock modular forms\end{it}, Math. Ann., to appear.
%\bibitem{BringmannKane} K. Bringmann and B. Kane, \begin{it}Inequalities for differences of Dyson's rank for all odd moduli\end{it}, Math. Res. Lett., accepted for publication.
\bibitem{BKW}K. Bringmann, B. Kane, and W. Kohnen, \begin{it}Locally harmonic Maass forms and the kernel of the Shintani lift,\end{it} Int. Math. Res. Not., accepted for publication.
%\bibitem{BKS}K. Bringmann, B. Kane, and S. Zwegers, \begin{it}On the completed generating function of locally harmonic Maass forms,\end{it} Compos. Math., recommended for publication.
%\bibitem{BringmannLovejoy} K. Bringmann and J. Lovejoy, \begin{it}Overpartitions and class numbers of binary quadratic forms\end{it}, Proc. Nat. Acad. Sci. USA \begin{bf}106\end{bf} (2009), 5513--5516.
%\bibitem{BringmannOnoMathAnn} K. Bringmann and K. Ono, \begin{it}Arithmetic properties of coefficients of half-integral weight Maass-Poincar\'e series\end{it}, Math. Ann. \begin{bf}337\end{bf} (2007), 591--612.
%\bibitem{BringmannOnoLift} K. Bringmann and K. Ono, \begin{it}Lifting elliptic cusp forms to Maass forms with an application to partitions\end{it}, Proc. Nat. Acad. Sci. U.S.A. \begin{bf}104\end{bf} (2007), 3725--3731.
%\bibitem{BringmannOnoInvent} K. Bringmann and K. Ono, \begin{it}The $f(q)$ mock theta function conjecture and partition ranks\end{it}, Invent. Math. \begin{bf}165\end{bf} (2006), 243--266.
%\bibitem{BringmannOnoKac}K. Bringmann and K. Ono, \begin{it}Some characters of Kac and Wakimoto and nonholomorphic modular functions\end{it}, Math. Ann. \begin{bf}345\end{bf} (2009), 547--558.
%\bibitem{BringmannOnoAnnals} K. Bringmann and K. Ono, \begin{it}Dyson's rank and Maass forms\end{it}, Ann. of Math. \begin{bf}171\end{bf} (2010), 419--449.
%\bibitem{BringmannPenniston} K. Bringmann and D. Penniston, \begin{it}Arithmetic properties of non-harmonic weak Maass forms\end{it}, Proc. Amer. Math. Soc. \begin{bf}137\end{bf} (2009), 825--833. 
%\bibitem{BruinierFunke} J. Bruinier and J. Funke, \begin{it}On two geometric theta lifts\end{it},  Duke Math. J.  \begin{bf}125\end{bf}  (2004),  no. 1, 45--90.
%\bibitem{BFI} J. Bruinier, J. Funke, and \"O. Imamo{$\overline{\text{g}}$}lu, \begin{it}Regularized theta liftings and periods of modular functions\end{it}, preprint.
%\bibitem{BruinierOno} J. Bruinier and K. Ono, \begin{it}Algebraic formulas for the coefficients of half-integral weight harmonic weak Maass forms\end{it}, preprint.
%\bibitem{BruinierOnoAnnals} J. Bruinier and K. Ono, \begin{it}Heegner divisors, $L$-functions, and Maass forms\end{it}, Ann. of Math. \begin{bf}172\end{bf} (2010), 2135--2181.
%\bibitem{BruinierOnoRhoades} J. Bruinier, K. Ono, and R. Rhoades, \begin{it}Differential operators for harmonic Maass forms and the vanishing of Hecke eigenvalues\end{it}, Math. Ann. \begin{bf}342\end{bf} (2008),  673--693.
%\bibitem{Choi} Y.-S. Choi, \begin{it}Tenth order mock theta functions in Ramanujan's lost notebook\end{it}, Invent. Math. \begin{bf}136\end{bf} (1999), no. 3, 497--569.
%\bibitem{ChoieZagier} Y. Choie and D. Zagier, \begin{it}Rational period functions for $\PSL_2(\Z)$,\end{it} in ``A Tribute to Emil Grosswald: Number Theory and Related Analysis'', Contemporary Math. \begin{bf}143\end{bf} (1993), 89--107.
%\bibitem{Cohen} H. Cohen, \begin{it}Sums involving the values at negative integers of $L$-functions of quadratic characters\end{it}, Math. Ann. \textbf{217} (1975), 271--285.
%\bibitem{Coleman}R. Coleman, \begin{it}$p$-adic Banach spaces and families of modular forms\end{it}, Invent. Math. \begin{bf}127\end{bf} (1997), no. 3, 417--479.
%\bibitem{Quantum} A. Dabholkar, S. Murthy, and D. Zagier, \begin{it}Quantum black holes, wall crossing, and mock modular forms\end{it}, submitted for publication.
%\bibitem{DukeJenkins} W. Duke and P. Jenkins, \begin{it}Integral traces of singular values of weak Maass forms\end{it}, Algebra and Number Theory \begin{bf}2\end{bf} (2008), 573--593.
%\bibitem{DukeJenkins2} W. Duke and P. Jenkins, \begin{it}On the zeros and coefficients of certain weakly holomorphic modular forms\end{it}, Pure Appl. Math. Q. \begin{bf}4\end{bf} (2008), 1327--1340.
%\bibitem{DIT} W. Duke, \"O. Imamo{$\overline{\text{g}}$}lu, and \'A. T\'oth, \begin{it}Cycle integrals of the $j$-function and mock modular forms,\end{it} Ann. of Math. \begin{bf}173\end{bf} (2011), 947--981.
%\bibitem{EguchiHikami} T. Eguchi and K. Hikami, \begin{it}Superconformal algebras and mock theta functions 2. Rademacher expansion for $K3$ surface\end{it}, Commun. Number Theory Phys. \begin{bf}3\end{bf} (2009), 531--554.
%\bibitem{Eichler}M. Eichler, \begin{it}Eine Verallgemeinerung der Abelschen Integrale\end{it}, Math. Z. \begin{bf}67\end{bf} (1957), 267--298. 
%\bibitem{FolsomOno} A. Folsom and K. Ono, \begin{it}Duality involving the mock theta function $f(q)$\end{it}, J. Lond. Math. Soc. \begin{bf}77\end{bf} (2008), 320--334.
%\bibitem{Fay} J. Fay, {\it{Fourier coefficients of the resolvent for a Fuchsian group}}, J. reine und angew. Math. \textbf{293-294} (1977), 143--203.
%\bibitem{FolsomKentOno} A. Folsom, Z. Kent, and K. Ono, \begin{it}$\ell$-adic properties of the partition function\end{it}, preprint. 
%\bibitem{Geletu}A. Geletu, \begin{it}Introduction to topological spaces and set-valued maps,\end{it} Lecture notes 
%\bibitem{GuerzhoyHecke} P. Guerzhoy, \begin{it}Hecke operators for weakly holomorphic modular forms and supersingular congruences\end{it} Proc. Amer. Math. Soc. \textbf{136} (2008), 3051--3059.
%\bibitem{GuerzhoyGrids} P. Guerzhoy, \begin{it}On weak Maass - modular grids of even integral weight\end{it}, Math. Res. Lett. \begin{bf}16\end{bf} (2009), 19--27.
%\bibitem{Iwaniec} H. Iwaniec, \begin{it}Fourier coefficients of modular forms of half-integral weight\end{it}, Invent. Math. \begin{bf}87\end{bf} (1987),385--401.
%\bibitem{IwaniecAut} H. Iwaniec, Topics in classical automorphic forms, Graduate Studies in Math. \begin{bf}17\end{bf}, American Mathematical Society, Providence, 1997.
%\bibitem{PavelKentOno} P. Guerzhoy, Z. Kent, and K. Ono, \begin{it}$p$-adic coupling of mock modular forms and shadows\end{it}, Proc. Nat. Acad. Sci. USA, accepted for publication.
%\bibitem{Hickerson} D. Hickerson, \begin{it}A proof of the mock theta conjectures\end{it},  Invent. Math. \begin{bf}94\end{bf} (1988), no. 3, 639--660.
%\bibitem{Hida} H. Hida, Elementary theory of $L$-functions and Eisenstein series, London Mathematical Society Student Texts \begin{bf}26\end{bf}, Cambridge University Press, 1993.
%\bibitem{KacWakimoto} V. Kac and M. Wakimoto, \begin{it}Integrable highest weight modules over affine superalgebras and Appell's function\end{it}, Comm. Math. Phys. \begin{bf}215\end{bf} (2001), 631--682.
%\bibitem{Kent}Z. Kent, \begin{it}Periods of Eisenstein series and some applications\end{it}, Proc. Amer. Math. Soc. \textbf{139} (2011), 3789--3794.
\bibitem{Knopp} M. Knopp, \begin{it}Polynomial automorphic forms and nondiscontinuous groups\end{it}, Trans. Amer. Math. Soc. \textbf{123} (1966), 506--520.
%\bibitem{KohnenMathAnn} W. Kohnen, \begin{it}Modular forms of half-integral weight on $\Gamma_0(4)$\end{it} Math. Ann. \textbf{248} (1980), 249--266.
%\bibitem{Kohnen} W. Kohnen, \begin{it}Newforms of half-integral weight\end{it}, J. reine und angew. Math. \begin{bf}333\end{bf} (1982), 32--72.
%\bibitem{KohnenMathAnn} W. Kohnen, \begin{it}Modular forms of half-integral weight on $\Gamma_0(4)$\end{it} \textbf{248} (1980), 249--266.
%\bibitem{KohnenFourier} W. Kohnen, \begin{it}Fourier coefficients of modular forms of half-integral weight,\end{it} Math. Ann. \begin{bf}271\end{bf} (1985), 237--268.
\bibitem{KohnenZagier} W. Kohnen and D. Zagier, \begin{it}Values of $L$-series of modular forms at the center of the critical strip,\end{it} Invent. Math. \textbf{64} (1981), 175--198.
%\bibitem{KohnenZagierRational} W. Kohnen and D. Zagier, \begin{it}Modular forms with rational periods\end{it} in ``Modular Forms,'' ed. by R. A. Rankin, Ellis Horwood (1984), 197--249.
%\bibitem{Laures}G. Laures, \begin{it}$K(1)$-local topological modular forms\end{it}, \begin{bf}157\end{bf} (2004), no. 2, 371--403.
%\bibitem{LewisZagier} J. Lewis and D. Zagier, \begin{it}Period functions for Maass wave forms. I\end{it}, Ann. of Math. \textbf{153} (2001), 191--258.
%\bibitem{Maass} H. Maass, \begin{it}\"Uber eine neue Art von nicht-analytischen automorphen Funktionen und die Bestimmung Dirichletscher Reihen durch Functionalgleichungen\end{it}, Math. Ann. \textbf{121} (1949), 141--183.
%\bibitem{ManRamVas} M. Manickam, B. Ramakrishnan, and T Vasudevan, \begin{it}On the theory of newforms of half-integral weight\end{it}, J. Number Theory \begin{bf}34\end{bf} (1990), 210--224.
%\bibitem{Miyake} T. Miyake, \begin{it}Modular forms.\end{it} Translated from the Japanese by Yoshitaka Maeda. Springer-Verlag, Berlin, 1989. pp. 335.
%\bibitem{Niebur} D. Niebur, \begin{it}A class of nonanalytic automorphic functions\end{it}, Nagoya Math. J. \textbf{52} (1973), 133--145.
%\bibitem{OnoBook} K. Ono, \begin{it}The web of modularity: arithmetic of the coefficients of modular forms and $q$-series\end{it}, CBMS Regional Conference Series in Mathematics, 102.  Published for the Conference Board of the Mathematical Sciences, Washington, DC; by the American Mathematical Society, Providence, RI, 2004.
%\bibitem{OnoUnearth} K. Ono, \begin{it}Unearthing the visions of a master: harmonic Maass forms and number theory\end{it}, Proceedings of the 2008 Harvard-MIT Current Developments in Mathematics Conference, International Press, Somerville, MA, 2009, 347--454.
%\bibitem{Poincare} H. Poincar\'e, \begin{it}Sur les invariantes arithm\'etiques\end{it}, J. f\"ur die Reine und angew. Math., \begin{bf}129\end{bf} (1905), 89--150.
%\bibitem{Ramanujan} S. Ramanujan, \begin{it}The lost notebook and other unpublished papers\end{it}, Narosa Publishing House, New Delhi, 1987.
%\bibitem{Ribet} K. Ribet, \begin{it}Galois representations attached to eigenforms with Nebentypus\end{it}, in: J.-P. Serre, D. B. Zagier (eds.), Modular Functions of one Variable V (Bonn 1976), Lect. Notes in Math. 601, Springer (1977), 17-52. 
\bibitem{Rudin} W. Rudin, Functional Analysis, New York, McGraw--Hill, 1991.
%\bibitem{Serre72} J.-P. Serre, \begin{it}Formes modulaires et fonctions z\^eta $p$-adiques\end{it}, Modular functions of one variable (Proc. Internat. Summer School, Univ. Antwerp, 1972), pp. 191-268. Lecture Notes in Math, Vol. 350, Springer, Berlin, 1973. 
%\bibitem{Serre}  J.-P. Serre, \begin{it}Divisibilit\'e de certaines fonctions arithm\'etiques\end{it},  Enseign. Math. (2)  \begin{bf}22\end{bf}  (1976), no. 3-4, 227--260.
%\bibitem{Shimura} G. Shimura, \begin{it} On modular forms of half integral weight\end{it}, Ann. of Math. \textbf{97} (1973), 440--481.
%\bibitem{ShimuraAut} G. Shimura, \begin{it}Introduction to the arithmetic theory of automorphic functions\end{it}, Reprint of the 1971 original.  Publications of the Mathematical Society of Japan, 11 Kan$\hat{\text{o}}$ Memorial Lectures, 1, Princeton Univ. Press, Princeton, NJ, 1994.
%\bibitem{Shintani}  T. Shintani, \begin{it}On construction of holomorphic cusp forms of half integral weight\end{it}, Nagoya Math. J. \begin{bf}58\end{bf} (1975), 83--126.
%\bibitem{Siegel} C. Siegel, Advanced analytic number theory, 2nd ed., Tata Institute of Fundamental Research Studies in Mathematics, 9, Tata Institute of Fundamental Research, Bombay, 1980.
%\bibitem{SiegelEisen} C. Siegel, \begin{it}Berechnung von Zetafunktionen an ganzzahligen Stellen,\end{it} Nachr. Akad. Wiss. G\"ottingen Math.-Phys. Kl. II \textbf{10} (1969), 87--102.
%\bibitem{Ueda} M. Ueda, \begin{it} On twisting operators and newforms of half-integral weight. II\end{it}, Nagoya Math J. \begin{bf}149\end{bf} (1998), 117--172.
%\bibitem{Waldspurger} J.-L. Waldspurger, \begin{it}Sur les coefficients de Fourier des formes modulaires de poids demi-entier,\end{it} J. Math. Pures et Appl. \begin{bf}60\end{bf} (1981), 375--484.
%\bibitem{Wan} D. Wan, \begin{it}Dimension variation of classical and $p$-adic modular forms\end{it}, Invent. Math. \begin{bf}133\end{bf} (1998), no. 2, 449--463.
%\bibitem{Watson} G. Watson, \begin{it}The final problem : An account of the mock theta functions\end{it}, J. London Math. Soc. \begin{bf}11\end{bf} (1936), 55--80.
%\bibitem{WhittakerWatson} E. Whittaker and G. Watson, {\it{A course in modern analysis, 4th ed.}}, Cambridge University Press, Cambridge, England (1990), 620 pp. 
%\bibitem{ZagierZeta} D. Zagier, \begin{it}Modular forms whose Fourier coefficients involve zeta-functions of quadratic fields,\end{it} in Modular forms of One Variable VI, Lecture notes in Math. \textbf{627}, Springer, Berlin, 1977.
%\bibitem{ZagierND} D. Zagier, \begin{it}The Rankin--Selberg method for automorphic functions which are not of rapid decay\end{it}, J. Fac. Sci. Tokyo \begin{bf}28\end{bf} (1982), 415--438.
% \bibitem{ZagierTraces} D. Zagier, {\it{Traces of Singular Moduli}}, Motives, polylogarithms and Hodge theory, Part I (Irvine, CA, 1998), 211--244,  Int. Press Lect. Ser., 3, I, Int. Press, Somerville, MA, 2002.
%\bibitem{Zagier} D. Zagier, \begin{it}Ramanujan's mock theta functions and their applications\end{it} [d'apres Zwegers and Bringmann-Ono], S\'eminaire Bourbaki \begin{bf}986\end{bf} (2007).
\bibitem{ZagierRQ} D. Zagier, \begin{it}Modular forms associated to real quadratic fields,\end{it} Invent. Math. \begin{bf}30\end{bf} (1975), 1--46.
\bibitem{ZagierQuantum} D. Zagier, \begin{it}From quadratic functions to modular functions,\end{it}  Number Theory in Progress. \textbf{2}, Proceedings of Internat. Conference on Number Theory, Zakopane (1997), 1147--1178.
%\bibitem{Zwegers} S. Zwegers, Mock $\vartheta$-functions and real analytic modular forms, q-series with applications to combinatorics, number theory, and physics (Ed. B. C. Berndt and K. Ono), Contemp. Math. \begin{bf}291\end{bf}, Amer. Math. Soc. (2001), pages 269-277.
%\bibitem{ZwegersThesis} S. Zwegers, \begin{it}Mock theta functions\end{it}, Ph.D. thesis, Utrecht University (2002).
%\bibitem{ZwegersDual} S. Zwegers, \begin{it}The Folsom-Ono grid contains only integers\end{it}, Proc. Amer. Math. Soc. \begin{bf}137\end{bf} (2009), 1579--1584.
 \end{thebibliography}
\end{document}